\newtheorem{theorem}{Theorem}[section]
\newtheorem{lemma}[theorem]{Lemma}
\newtheorem{remark}[theorem]{Remark}
\numberwithin{equation}{section}
\begin{document}

\baselineskip=16pt

\title[Holomorphic Cartan geometries]{Holomorphic Cartan
geometries, Calabi--Yau manifolds and rational curves}

\author[I. Biswas]{Indranil Biswas}

\address{School of Mathematics, Tata Institute of Fundamental
Research, Homi Bhabha Road, Bombay 400005, India}

\email{indranil@math.tifr.res.in}

\author[B. McKay]{Benjamin McKay}

\address{School of Mathematical Sciences,
University College Cork, National University of Ireland}

\email{B.McKay@ucc.ie}

\subjclass[2000]{53C15, 14M17}

\keywords{Cartan geometry, holomorphic connection, Calabi--Yau 
manifold, rational curve}

\date{}

\begin{abstract}
We prove that
if a Calabi--Yau manifold $M$ admits a holomorphic Cartan
geometry, then $M$ is covered by a complex torus. This is done by
establishing the Bogomolov inequality for semistable sheaves on
compact K\"ahler manifolds. We also classify all holomorphic
Cartan geometries on rationally connected complex projective
manifolds.
\end{abstract}

\maketitle

\section{Introduction} 

In \cite{Mc3}, the second author conjectured that
the only Calabi--Yau manifolds that bear holomorphic Cartan
geometries are those covered by the complex tori.
Our first theorem confirms this conjecture. More precisely,
we prove:

\begin{theorem}\label{thi1}
Let $M$ be a compact connected K\"ahler manifold with
$c_1(M)\,=\, 0$. If $M$ is equipped with a holomorphic
Cartan geometry,
then there is an \'etale Galois holomorphic covering map
$$
A\, \longrightarrow\, M\, ,
$$
where $A$ is a complex torus.
\end{theorem}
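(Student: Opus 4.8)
The plan is to use the holomorphic Cartan geometry to produce a holomorphic vector bundle carrying a holomorphic connection that surjects onto the tangent bundle $TM$, and then to combine Atiyah's vanishing of Chern classes with Yau's theorem and the Bogomolov inequality for semistable sheaves on compact K\"ahler manifolds in order to force $TM$ to be flat; Bieberbach's theorem then finishes the argument. Concretely, let the Cartan geometry be of type $(G,H)$, given by a holomorphic principal $H$-bundle $E_H\to M$ with its Cartan connection. Extending the structure group gives a holomorphic principal $G$-bundle $E_G=E_H\times_H G$, and the Cartan connection induces on $E_G$ a holomorphic connection (the associated tractor connection). Hence, by Atiyah's theorem, the Atiyah class of $E_G$ vanishes, so every holomorphic vector bundle associated to $E_G$ has all of its rational Chern classes equal to zero; in particular all Chern classes of $\mathrm{ad}(E_G)$ vanish. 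Since the Lie algebra of $H$ sits inside that of $G$ as an $H$-submodule, $\mathrm{ad}(E_H)$ is a holomorphic subbundle of $\mathrm{ad}(E_G)$, and the Cartan connection identifies the quotient with $TM$, giving a short exact sequence
\begin{equation}\label{e:bmck1}
0\,\longrightarrow\,\mathrm{ad}(E_H)\,\longrightarrow\,\mathrm{ad}(E_G)\,\longrightarrow\,TM\,\longrightarrow\,0 .
\end{equation}
As $c_1(M)=0$ and all Chern classes of $\mathrm{ad}(E_G)$ vanish, the Whitney formula applied to \eqref{e:bmck1} yields $c_1(\mathrm{ad}(E_H))=0$ and $c_2(TM)=-c_2(\mathrm{ad}(E_H))$. (We may assume the complex dimension $n$ of $M$ satisfies $n\ge 2$, the cases $n\le 1$ being immediate.)

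Next, fix a Ricci-flat K\"ahler metric $\omega$ on $M$, which exists by Yau's theorem since $c_1(M)=0$. Its Chern connection makes $TM$ Hermitian--Einstein with vanishing Einstein factor, so $TM$ --- and hence also $\Omega^1_M$ --- is $\omega$-polystable of degree zero. I would then observe that \emph{any} holomorphic vector bundle $V$ on $M$ carrying a holomorphic connection is $\omega$-semistable of degree zero. Indeed $\deg_\omega V=0$ because $c_1(V)=0$; and if $V$ failed to be $\omega$-semistable, its maximal destabilizing subsheaf $S$ would be $\omega$-semistable with $\mu_\omega(S)=\mu_{\max}(V)>0$ and $\mu_{\max}(V/S)<\mu_\omega(S)$. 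Composing the holomorphic connection $V\to V\otimes\Omega^1_M$ with the projection $V\to V/S$ gives an $\mathcal{O}_M$-linear second fundamental form $S\to(V/S)\otimes\Omega^1_M$; since $\Omega^1_M$ is $\omega$-polystable of slope zero, $\mu_{\max}\bigl((V/S)\otimes\Omega^1_M\bigr)\le\mu_{\max}(V/S)<\mu_\omega(S)$, so there is no nonzero homomorphism from the $\omega$-semistable sheaf $S$ of slope $\mu_\omega(S)$ into it. Thus the second fundamental form vanishes, $S$ is preserved by the connection, so $S$ carries a holomorphic connection off a subset of codimension at least two and therefore $\deg_\omega S=0$ --- contradicting $\mu_\omega(S)>0$. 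Applying this to $\mathrm{ad}(E_G)$, that bundle is $\omega$-semistable of degree zero; and by \eqref{e:bmck1} and $c_1(\mathrm{ad}(E_H))=0$ the subsheaf $\mathrm{ad}(E_H)$ has the same slope zero, hence is $\omega$-semistable too.

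Now I would invoke the Bogomolov inequality for $\omega$-semistable torsion-free sheaves on compact K\"ahler manifolds (the analytic theorem established elsewhere in the paper), applied to $TM$ and to $\mathrm{ad}(E_H)$: since both have vanishing first Chern class,
\begin{equation}\label{e:bmck2}
\int_M c_2(TM)\wedge\omega^{n-2}\,\ge\,0\qquad\text{and}\qquad\int_M c_2(\mathrm{ad}(E_H))\wedge\omega^{n-2}\,\ge\,0 ,
\end{equation}
while these two integrals are negatives of each other by the relation $c_2(TM)=-c_2(\mathrm{ad}(E_H))$; hence $\int_M c_2(TM)\wedge\omega^{n-2}=0$. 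So $TM$ is $\omega$-polystable with $c_1(TM)=0$ and $\int_M c_2(TM)\wedge\omega^{n-2}=0$, and the standard Chern--Weil identity for the Hermitian--Einstein connection of $TM$ --- which, for the Ricci-flat metric $\omega$, is the Levi-Civita connection $\nabla$ --- expresses $\int_M\bigl(2n\,c_2(TM)-(n-1)c_1(TM)^2\bigr)\wedge\omega^{n-2}$ as a positive multiple of the $L^2$-norm of the trace-free part of the curvature of $\nabla$, which must therefore vanish. The trace part of the curvature of $\nabla$ is the Ricci form of $\omega$, which is zero; hence $\nabla$ is flat. Consequently $(M,\omega)$ is a compact flat K\"ahler manifold, and by Bieberbach's theorem the subgroup of translations in $\pi_1(M)$ is normal of finite index; the associated covering $A\to M$ is an \'etale Galois covering, and $A$ is a complex torus, the flat complex structure descending because it is $\nabla$-parallel. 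This is the assertion of the theorem.

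The step I expect to be the main obstacle is the Bogomolov inequality \eqref{e:bmck2} in the non-projective K\"ahler setting. Over projective manifolds it reduces, via the restriction theorems of Mehta--Ramanan and Flenner, to the case of surfaces; but over a general compact K\"ahler manifold one must instead pass from $\omega$-semistability to the existence of an admissible Hermitian--Einstein metric --- using the Bando--Siu extension of the Uhlenbeck--Yau theorem --- and then run the Chern--Weil computation. The remainder of the argument above is essentially formal once that inequality is in hand.
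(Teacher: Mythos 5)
Your proposal is correct and follows essentially the same route as the paper: the holomorphic connection on $E_G$ kills its Chern classes via Atiyah, the exact sequence $0\to\mathrm{ad}(E_H)\to\mathrm{ad}(E_G)\to TM\to 0$ together with the Bogomolov inequality for $\omega$-semistable sheaves (applied to both $TM$ and $\mathrm{ad}(E_H)$) forces $\int_M c_2(TM)\wedge\omega^{n-2}=0$, and polystability of $TM$ then yields flatness. The only cosmetic differences are that you spell out the semistability of bundles admitting holomorphic connections (which the paper cites from the literature) and finish via the Chern--Weil identity and Bieberbach, where the paper instead deduces projective flatness, hence $c_2(TM)=0$, and cites Inoue--Kobayashi--Ochiai.
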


This generalizes our earlier result in \cite{BM}; in \cite{BM},
Theorem \ref{thi1} was proved under the assumption that $M$
is a complex projective manifold.

We then consider Cartan geometries on rationally connected
projective varieties.

Let $G$ be a complex Lie group and $H\,\subset\, G$
a closed complex subgroup. The following theorem is proved in
Section \ref{sec4}.

\begin{theorem}\label{thi2}
Let $M$ be a rationally connected complex projective
manifold and $(E_H\, ,\theta)$ a holomorphic Cartan geometry
of type $G/H$ on $M$. Then
the homogeneous space $G/H$ is a rational projective variety, and
$M\, \cong\, G/H$. Furthermore,
$(M\, ,E_H\, ,\theta)$ is holomorphically isomorphic to
the tautological Cartan geometry of type $G/H$ on $G/H$.
\end{theorem}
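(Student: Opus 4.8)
The plan is to prove first that the holomorphic Cartan geometry $(E_H,\theta)$ is \emph{flat}, and then to produce a biholomorphism $M\cong G/H$ carrying it to the tautological geometry. For the flatness step I would pass to the extended principal $G$-bundle $E_G:=E_H\times_H G$. Following the standard construction (Sharpe), $E_G$ carries a canonical holomorphic principal connection $\nabla$ whose connection form restricts to $\theta$ along the inclusion $E_H\hookrightarrow E_G$, and whose curvature is induced by the Cartan curvature $K:=d\theta+\tfrac12[\theta,\theta]$. Hence $\nabla$ is flat precisely when $K=0$, which is precisely when $(E_H,\theta)$ is flat; here $K$ is a holomorphic section of $\Omega^2_M\otimes\mathrm{ad}(E_G)$, with $\mathrm{ad}(E_G)$ the adjoint bundle.

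The heart of the matter is the vanishing $K=0$, and this is where rational connectedness enters. Since $M$ is rationally connected, very free rational curves cover a dense Zariski-open subset of $M$; let $f\colon C\to M$ be such a curve, with $C$ the projective line. Then $f^*TM\cong\bigoplus_i\mathcal{O}(a_i)$ with every $a_i\ge 1$, so each line-bundle summand of $f^*\Omega^2_M=\bigwedge^2 f^*\Omega^1_M$ has degree $\le -2$. On the other hand $(f^*E_G,\,f^*\nabla)$ is a holomorphic principal $G$-bundle equipped with a holomorphic connection over the curve $C$; a curve carries no nonzero holomorphic $2$-forms, so $f^*\nabla$ is flat, and since $C$ is simply connected $f^*E_G$ is holomorphically trivial, whence $f^*\mathrm{ad}(E_G)$ is trivial. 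Therefore $f^*\bigl(\Omega^2_M\otimes\mathrm{ad}(E_G)\bigr)$ is a direct sum of negative line bundles and has no nonzero section, which forces $K$ to restrict to zero on $f(C)$, i.e.\ $K_x=0$ for every $x\in f(C)$. As these images cover a dense open subset and $K$ is holomorphic, $K\equiv 0$.

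Now $\nabla$ is a flat holomorphic connection on $E_G$, and $M$, being a rationally connected projective manifold, is simply connected; hence $E_G\cong M\times G$ holomorphically with $\nabla$ the product connection. Under this trivialization the holomorphic reduction $E_H\subset E_G$ to $H$ is the same datum as a holomorphic map $\phi\colon M\to G/H$, and $\theta$ becomes the restriction to $E_H$ of the Maurer--Cartan form of $G$; a direct computation identifies the non-degeneracy condition defining a Cartan geometry with the requirement that $d\phi$ be an isomorphism, i.e.\ that $\phi$ be a local biholomorphism. Since $M$ is compact, $\phi$ is proper and open, so its image is a union of connected components of $G/H$; taking $G/H$ connected, as is customary for the model, $\phi$ is a finite \'etale covering. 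Its target $G/H=\phi(M)$ is then rationally connected, being the image of $M$, and $\phi$ is the universal covering of $G/H$, hence Galois with finite deck group, so averaging a K\"ahler form of $M$ over the deck group shows $G/H$ is K\"ahler. A rationally connected compact K\"ahler manifold is simply connected (Campana; Koll\'ar--Miyaoka--Mori), so the deck group is trivial and $\phi\colon M\to G/H$ is a biholomorphism. Tracing back through the trivialization, $\phi$ carries $(E_H,\theta)$ to $\bigl(G\to G/H,\ \text{Maurer--Cartan form of }G\bigr)$, which is the tautological Cartan geometry of type $G/H$ on $G/H$. Finally $G/H\cong M$ is projective, and a rationally connected projective homogeneous space is rational (by the structure theory of homogeneous projective manifolds: rational connectedness eliminates the abelian-variety factor, leaving a product of rational homogeneous spaces), which is the remaining assertion.

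The step I expect to be the main obstacle is the vanishing $K=0$. The decisive observation is that the principal connection extending $\theta$ becomes flat — and hence the underlying bundle holomorphically trivial — on \emph{every} rational curve in $M$, so that after restriction to a sufficiently positive rational curve only the strongly negative contribution of $\Omega^2_M$ survives and kills the curvature. Once flatness is in hand, the developing map $\phi$, the identification with the tautological geometry, and the rationality of $G/H$ are routine Cartan-geometry and rational-connectedness bookkeeping.
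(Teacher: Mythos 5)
Your proposal is correct and follows essentially the same route as the paper: pass to the extended bundle $E_G$ with its induced holomorphic connection, show that connection is flat because $M$ is rationally connected, then use the developing map together with the simple connectedness of $M$ and of $G/H$ (both rationally connected projective, hence simply connected by Campana and Koll\'ar) to conclude that the developing map is a biholomorphism identifying the geometry with the tautological one. The only difference is one of self-containedness: where the paper cites Biswas (\emph{Principal bundle, parabolic bundle, and holomorphic connection}, Theorem~3.1) for the vanishing of the curvature and Sharpe/McKay for the developing map, you prove the flatness directly by restricting the curvature, a section of $\Omega^2_M\otimes\mathrm{ad}(E_G)$, to very free rational curves --- which is precisely the argument behind the cited result.
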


While we were finalizing this work, a preprint \cite{Du}
appeared where Theorem \ref{thi1} is proved under the assumption
that the image of $H$ in $\text{Aut}(\text{Lie}(G))$ for the
adjoint representation is algebraic (see \cite[p. 1]{Du}).
Theorem \ref{thi2} is known for parabolic geometries
\cite{Mc2}, \cite{Bi3}.

This material is based upon works supported by the Science Foundation
Ireland under Grant No. MATF634.

\section{Bogomolov inequality}

Let $M$ be a compact connected K\"ahler manifold
equipped with a K\"ahler form $\omega$.

For a torsionfree
coherent analytic sheaf $V\, \not=\, 0$ on $M$, define
$$
\text{degree}(V)\, :=\, (c_1(V)\wedge \omega^{d-1})\cap [M]
\,\in\, {\mathbb R}\, ,
$$
where $d\, =\, \dim_{\mathbb C} M$. We recall that $V$
is called \textit{semistable} (respectively, \textit{stable}) if
for all torsionfree coherent analytic sheaves $V'\, \subset\, V$
with $\text{rank}(V')\, <\, \text{rank}(V)$, the inequality
$$
\frac{\text{degree}(V')}{\text{rank}(V')}\, \leq\,
\frac{\text{degree}(V)}{\text{rank}(V)} ~\,~\,~\,\text{(respectively,
~}~\,\frac{\text{degree}(V')}{\text{rank}(V')}\, <\,
\frac{\text{degree}(V)}{\text{rank}(V)}\text{)}
$$
holds. A semistable sheaf $V$ is called \textit{polystable} if
it is a direct sum of stable sheaves.

\begin{lemma}\label{lem1}
Let $V$ be a semistable sheaf on $M$. Then
$$
((2r\cdot c_2(V) - (r-1)c_1(V)^2)\cup \omega^{d-2})\cap [M]
\,\geq\, 0\, ,
$$
where $r$ is the rank of $V$.
\end{lemma}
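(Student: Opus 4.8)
The plan is to reduce the asserted inequality — which I abbreviate $(\Delta(V)\cup\omega^{d-2})\cap[M]\ge 0$, where $\Delta(V):=2r\,c_2(V)-(r-1)c_1(V)^2=\mathrm{ch}_1(V)^2-2r\,\mathrm{ch}_2(V)$ — to the case of a \emph{stable reflexive} sheaf, where one can invoke the existence of a Hermitian--Einstein metric and conclude by a Chern--Weil computation. The first reduction is to pass to the reflexive hull: for torsionfree semistable $V$ the sheaf $V^{**}$ is again semistable of the same rank and slope (for $W\subset V^{**}$ the subsheaf $W\cap V\subset V$ has the same rank and degree as $W$), and, since the cokernel of $V\hookrightarrow V^{**}$ is torsion supported in codimension $\ge 2$ while $\omega$ is a Kähler class, $c_1(V^{**})=c_1(V)$ and $(c_2(V)\cup\omega^{d-2})\cap[M]\ge(c_2(V^{**})\cup\omega^{d-2})\cap[M]$; hence $(\Delta(V)\cup\omega^{d-2})\cap[M]\ge(\Delta(V^{**})\cup\omega^{d-2})\cap[M]$, with the same statements holding with ``stable'' in place of ``semistable''. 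So $V$ may be replaced by $V^{**}$ whenever convenient.

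Next I reduce to the \emph{stable} case by induction on $r$; the rank-one case is immediate by the first reduction, since then $\Delta(V^{**})=0$. Let $V$ be semistable of rank $r\ge 2$. If $V$ is not stable, choose a saturated subsheaf $W\subset V$ with $0<\mathrm{rk}(W)<r$ and $\mu(W)=\mu(V)$; then $W$ and $V/W$ are torsionfree, semistable, of slope $\mu(V)$, and $\mathrm{ch}_i(V)=\mathrm{ch}_i(W)+\mathrm{ch}_i(V/W)$ gives
\[
\frac{\Delta(V)}{r}=\frac{\Delta(W)}{\mathrm{rk}(W)}+\frac{\Delta(V/W)}{\mathrm{rk}(V/W)}-\frac{\mathrm{rk}(W)\,\mathrm{rk}(V/W)}{r}\,\xi^2,\qquad \xi:=\frac{c_1(W)}{\mathrm{rk}(W)}-\frac{c_1(V/W)}{\mathrm{rk}(V/W)}.
\]
Since $\mu(W)=\mu(V/W)$, the real $(1,1)$-class $\xi$ satisfies $(\xi\cup\omega^{d-1})\cap[M]=0$, so the Hodge index theorem on the compact Kähler manifold $M$ gives $(\xi^2\cup\omega^{d-2})\cap[M]\le 0$; cupping the displayed identity with $\omega^{d-2}$ and applying the inductive hypothesis to $W$ and $V/W$ then yields the inequality for $V$. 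If instead $V$ is stable, then by the first reduction it suffices to establish the inequality for $V^{**}$, which is stable and reflexive.

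It remains to treat a stable reflexive $V$. Let $M_0\subset M$ be its locally free locus, so $M\setminus M_0$ has complex codimension $\ge 3$. By the Bando--Siu theorem — the extension of the Uhlenbeck--Yau theorem to stable reflexive sheaves on compact Kähler manifolds — $V|_{M_0}$ carries an admissible Hermitian--Einstein metric $h$: the curvature $F_h$ satisfies $\sqrt{-1}\,\Lambda_\omega F_h=\lambda\cdot\mathrm{Id}$ for a constant $\lambda$ and lies in $L^2(M,\omega)$, and the Chern--Weil forms built from $F_h$ represent $c_1(V)$ and $c_2(V)$ as currents on $M$. The pointwise Lübke inequality for a Hermitian--Einstein connection asserts that $\bigl(2r\,c_2(V,h)-(r-1)c_1(V,h)^2\bigr)\wedge\omega^{d-2}$ is a nonnegative $(d,d)$-form on $M_0$ (it comes from the Cauchy--Schwarz bound $|F_h|^2\ge\tfrac1r|\mathrm{tr}\,F_h|^2$ together with the Einstein condition). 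Since $M_0$ has full measure, $(\Delta(V)\cup\omega^{d-2})\cap[M]$ equals the integral of this form over $M_0$, and is therefore $\ge 0$.

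The main obstacle is this last step, forced on us by the fact that $M$ is only assumed Kähler. In the projective case one avoids the analysis: a Mehta--Ramanathan restriction theorem reduces to semistable sheaves on surfaces, where Bogomolov's inequality is classical. Without an ample class one must go through Hermitian--Einstein metrics, and the technical heart is the Bando--Siu existence-and-regularity theory near the singular set $M\setminus M_0$ — in particular, knowing that the $L^1$ Chern--Weil forms of the singular metric $h$ still compute the topological numbers $(c_i(V)\cup\omega^{d-i})\cap[M]$ occurring in the statement.
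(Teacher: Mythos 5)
Your proof is correct, but your reduction to the extremal case runs along a genuinely different track from the paper's. Where you reduce ``semistable $\Rightarrow$ stable'' by induction on the rank, using the discriminant identity
$$
\frac{\Delta(V)}{r}\,=\,\frac{\Delta(W)}{\mathrm{rk}(W)}+\frac{\Delta(V/W)}{\mathrm{rk}(V/W)}-\frac{\mathrm{rk}(W)\,\mathrm{rk}(V/W)}{r}\,\xi^2
$$
together with the Hodge--Riemann bilinear relations (a primitive real $(1,1)$--class has $(\xi^2\cup\omega^{d-2})\cap[M]\le 0$), the paper instead takes a Jordan--H\"older filtration of $V$ with stable torsionfree quotients of the same slope and observes that the associated graded $\widehat V=\bigoplus V_i/V_{i-1}$ is \emph{polystable} with $c_i(\widehat V)=c_i(V)$ by the Whitney formula; it then invokes the Bando--Siu inequality directly in its polystable reflexive form \cite[p.~40, Corollary 3]{BS}. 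The two routes trade different costs: the paper's is shorter because the cited corollary of \cite{BS} already covers polystable reflexive sheaves, so no Hodge-theoretic input is needed beyond the citation; yours only needs the stable case of Bando--Siu but pays for it with the Hodge index argument (which is the classical Bogomolov/Huybrechts--Lehn reduction, and works fine on a compact K\"ahler manifold since the relevant classes are real $(1,1)$--classes). Your handling of the passage to the reflexive hull --- $c_1$ unchanged, $c_2$ increasing against $\omega^{d-2}$ because the cokernel is torsion in codimension $\ge 2$ --- is the same computation as the paper's exact sequence \eqref{e2}--\eqref{e3}, and your final appeal to admissible Hermitian--Einstein metrics and the pointwise L\"ubke inequality is precisely the content of the result the paper cites, so the analytic endpoint is common to both arguments.
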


\begin{proof}
If $V$ is polystable and reflexive, then this is proved
in \cite{BS} (see \cite[p. 40, Corollary 3]{BS}).

First assume that $V$ is polystable.

The double dual $V^{**}$ is reflexive. Since
$V$ is torsionfree, the natural homomorphism
$V\, \longrightarrow\, V^{**}$ is injective. Hence
we have a short exact sequence of coherent
analytic sheaves
\begin{equation}\label{e2}
0\,\longrightarrow\, V\,\longrightarrow\, V^{**}
\,\longrightarrow\, T \,\longrightarrow\, 0
\end{equation}
on $M$,
where $T$ is a torsion sheaf, and the (complex) codimension
of the support of $T$ is at least two (see
\cite[p. 154, Proposition (4.14)]{Kob}).

Since $V$ is polystable, we know that $V^{**}$ is also
polystable (follows from the fact that the double
dual of a stable sheaf is stable). Hence
\begin{equation}\label{e1}
((2r\cdot c_2(V^{**}) - (r-1)c_1(V^{**})^2)\cup \omega^{d-2})
\cap [M]\,\geq\, 0
\end{equation}
\cite[p. 40, Corollary 3]{BS}.

Consider the quotient $T$ in \eqref{e2}.
Let $S_1\, ,\cdots\, , S_\ell$ be the components of the
support of $T$ of codimension two. Let $r_i$ be the
rank of $T\vert_{S_i}$. Using \eqref{e2} we conclude
that
\begin{equation}\label{e3}
c_1(V)\, =\, c_1(V^{**})~\,~\, \text{~and~}~\,~\,
c_2(V)\, =\, c_2(V^{**})+\sum_{i=1}^\ell r_i\cdot
\psi([S_i])\, ,
\end{equation}
where $\psi\, :\, H_{2d-4}(M,\, {\mathbb Q})\,\longrightarrow\,
H^4(M,\, {\mathbb Q})$ is the isomorphism given by
the Poincar\'e duality.

Since $\text{Volume}_\omega(S_i) \,=\, (\omega^{d-2}\cup \psi([S_i]))
\cap [M]\, \geq\, 0$, from \eqref{e1} and
\eqref{e3} we conclude that
\begin{equation}\label{e4}
((2r\cdot c_2(V) - (r-1)c_1(V)^2)\cup \omega^{d-2})\cap [M]
\,\geq\, 0\, .
\end{equation}

Finally, let $V$ be any semistable sheaf. There is a
filtration of coherent analytic subsheaves of $V$
$$
0\, =:\, V_0\, \subset\, V_1 \, \subset\, \cdots
\, \subset\, V_{n-1} \, \subset\, V_n\,=\, V
$$
such that each successive quotient $V_i/V_{i-1}$,
$i\, \in\, [1\, , n]$, is torsionfree and stable, and
furthermore,
$$
\frac{\text{degree}(V_i/V_{i-1})}{\text{rank}(V_i/V_{i-1})}
\,=\, \frac{\text{degree}(V)}{\text{rank}(V)}
$$
(see \cite[p. 175, Theorem (7.18)]{Kob}). Therefore,
$$
\widehat{V}\, :=\, \bigoplus_{i=1}^n V_i/V_{i-1}
$$
is a polystable sheaf. Since
$$
c_i(V)\, =\, c_i(\widehat{V})
$$
for all $i\, \geq\, 0$, from \eqref{e4} we conclude that
the inequality in the lemma holds. This completes the
proof of the lemma.
\end{proof}

\section{Cartan geometries and Calabi--Yau manifolds}\label{sec3}

Let $G$ be a complex Lie group and $H\,\subset\, G$
a closed complex subgroup. The Lie algebra of $G$
(respectively, $H$) will be denoted by $\mathfrak g$
(respectively, $\mathfrak h$). As before, $M$ is a compact
connected K\"ahler manifold.

A \textit{holomorphic Cartan geometry} on $M$ of type $G/H$
is a holomorphic principal $H$--bundle
\begin{equation}\label{e0}
E_H\, \longrightarrow\, M
\end{equation}
together with a holomorphic one--form on
$E_H$ with values in $\mathfrak g$
\begin{equation}\label{e00}
\theta\, \in\, H^0(E_H,\, \Omega^1_{E_H}\otimes_{\mathbb C}
{\mathfrak g})
\end{equation}
satisfying the following three conditions:
\begin{itemize}
\item $\tau^*_h\theta\, =\, \text{Ad}(h^{-1})\circ\theta$
for all $h\, \in\,H$, where $\tau_h\, :\, E_H\, \longrightarrow\,
E_H$ is the translation by $h$,

\item $\theta(z)(\zeta_v(z)) \, =\, v$ for all
$v\, \in\, {\mathfrak h}$ and $z\, \in\, E_H$, where $\zeta_v$
is the vector field on $E_H$ defined by $v$ using the
action of $H$ on $E_H$, and

\item for each point $z\, \in\, E_H$, the homomorphism from the
holomorphic tangent space
\begin{equation}\label{e-1}
\theta(z) \,:\, T_zE_H\, \longrightarrow\, {\mathfrak g}
\end{equation}
is an isomorphism of vector spaces.
\end{itemize}
(See \cite{S}.)

Let $(E_H\, ,\theta)$ be a holomorphic Cartan geometry of type $G/H$
on $M$. Let
\begin{equation}\label{eg}
E_G\, :=\, (E_H\times G)/H\, \longrightarrow\, M
\end{equation}
be the holomorphic principal $G$--bundle obtained by extending the
structure group of $E_H$ using the inclusion of $H$ in $G$ (the
action of $h\,\in\, H$ sends any $(z\, ,g)\, \in\,
E_H\times G$ to $(zh\, ,h^{-1}g)$). Let
$$
\theta_{\text{MC}}\, :\,
TG\, \longrightarrow\, G\times\mathfrak g
$$
be the $\mathfrak g$--valued Maurer--Cartan
one--form on $G$ constructed using
the left invariant vector fields.
Consider the $\mathfrak g$--valued holomorphic
one--form
$$
\widetilde{\theta}\, :=\, p^*_1 \theta + p^*_2\theta_{\text{MC}}
$$
on $E_H\times G$, where $p_1$ (respectively,
$p_2$) is the projection of $E_H\times G$ to
$E_H$ (respectively, $G$). This form $\widetilde{\theta}$
descends to a $\mathfrak g$--valued holomorphic one--form on the
quotient space $E_G$ in \eqref{eg}. This descended one--form
defines a holomorphic connection on $E_G$.

Therefore, the principal $G$--bundle $E_G$ in \eqref{eg} is
equipped with a holomorphic connection.

Let $\text{ad}(E_G)$ (respectively, $\text{ad}(E_H)$) be the adjoint
bundle of $E_G$ (respectively, $E_H$). So $\text{ad}(E_G)$
(respectively, $\text{ad}(E_H)$) is associated to $E_G$ for the
adjoint action of $G$ (respectively, $H$) on
$\mathfrak g$ (respectively, $\mathfrak h$).
There is a natural inclusion
$$
\text{ad}(E_H)\,\hookrightarrow\, \text{ad}(E_G)\, .
$$
Using the form $\theta$, the quotient $\text{ad}(E_G)/\text{ad}(E_H)$
gets identified with the holomorphic tangent bundle $TM$. Therefore,
we get a short exact sequence of holomorphic vector bundle on $M$
\begin{equation}\label{eq1}
0\,\longrightarrow\, \text{ad}(E_H)\,\longrightarrow\,\text{ad}(E_G)
\,\longrightarrow\, TM \,\longrightarrow\, 0\, .
\end{equation}

\begin{theorem}\label{thm1}
Assume that
\begin{itemize}
\item $c_1(M)\,=\, 0$, and

\item $M$ is equipped with a holomorphic Cartan geometry
$(E_H\, ,\theta)$ of type $G/H$.
\end{itemize}
Then there is an \'etale Galois holomorphic covering map
$$
\gamma\, :\, A\, \longrightarrow\, M\, ,
$$
where $A$ is a complex torus.
\end{theorem}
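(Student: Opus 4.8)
The plan is to exploit the holomorphic connection on $E_G$ together with the Calabi--Yau hypothesis to force the associated bundles to be numerically flat, and then invoke a structure theorem for such bundles. First I would observe that since $E_G$ carries a holomorphic connection, its adjoint bundle $\text{ad}(E_G)$ carries an induced holomorphic connection, and hence all the Chern classes of $\text{ad}(E_G)$ vanish (by the standard Atiyah-class obstruction argument: a holomorphic connection forces $c_i\otimes\mathbb{Q}=0$). In particular $c_1(\text{ad}(E_G))=0$ and $c_2(\text{ad}(E_G))=0$. From the short exact sequence \eqref{eq1} and $c_1(M)=0$, one reads off $c_1(\text{ad}(E_H))=0$ and $c_2(\text{ad}(E_H))=c_2(\text{ad}(E_G))-(\text{mixed terms involving }c_1(TM))=0$ as well, using $c_1(M)=0$ to kill the cross terms.

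The crux is to run the Bogomolov inequality of Lemma~\ref{lem1} in both directions. The idea is to pick a Harder--Narasimhan--type slope filtration of $\text{ad}(E_G)$ (or work with a polystable object with the same Chern classes) and show, using that all Chern classes vanish and that Lemma~\ref{lem1} gives $((2r\,c_2-(r-1)c_1^2)\cup\omega^{d-2})\cap[M]\geq 0$ for each semistable piece, that each semistable piece $W$ satisfies $(c_1(W)\cup\omega^{d-1})\cap[M]=0$ (degree zero, forced by $c_1(\text{ad}(E_G))=0$ and the connection) and $(c_2(W)\cup\omega^{d-2})\cap[M]$ pinned at the extremal value, forcing equality in Bogomolov. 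Equality in the Bogomolov inequality for a stable sheaf on a Kähler manifold means it is projectively flat (admits a projectively flat Hermitian--Einstein metric), i.e. comes from a projective unitary representation of $\pi_1$. Assembling the pieces, $\text{ad}(E_G)$ — hence $\text{ad}(E_H)$ and the quotient $TM$ — becomes numerically flat: $TM$ has a filtration with flat (indeed unitary-flat) quotients, $c_1(TM)=c_2(TM)=0$, all Chern classes vanish.

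The conclusion then follows from the structure theory of compact Kähler manifolds with vanishing first and second Chern classes. Concretely, once $TM$ is shown to be numerically flat (a successive extension of Hermitian-flat bundles with vanishing Chern classes), I would invoke the theorem (Yau's solution of the Calabi conjecture together with the Cheeger--Gromoll / Beauville--Bogomolov decomposition, or more directly the result that a compact Kähler manifold with $c_1=0$ and numerically flat $TM$ is covered by a torus) to obtain a finite étale cover $A\to M$ with $A$ a complex torus; passing to the Galois closure makes the covering Galois. The main obstacle I anticipate is the middle step: squeezing genuine equality out of the Bogomolov inequality for the possibly non-locally-free semistable factors, and then upgrading ``projectively flat with zero Chern classes'' to ``flat'' and propagating flatness through the extension \eqref{eq1} down to $TM$ — in particular handling the torsion/codimension-two corrections that appear in Lemma~\ref{lem1}'s proof and making sure the slope-zero filtration of $\text{ad}(E_G)$ interacts correctly with the sub-bundle $\text{ad}(E_H)$. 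A secondary subtlety is that $H$ and $G$ are arbitrary complex Lie groups, so $\mathfrak{g}$ need not be semisimple and the adjoint bundle need not be self-dual; I would need the vanishing of $c_1$ and $c_2$ of $\text{ad}(E_G)$, which comes cleanly from the holomorphic connection, rather than from any reductivity.
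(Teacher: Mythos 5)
Your overall architecture matches the paper's: kill the Chern classes of $\mathrm{ad}(E_G)$ using the holomorphic connection, push the Bogomolov inequality through the extension $0\to\mathrm{ad}(E_H)\to\mathrm{ad}(E_G)\to TM\to 0$, and deduce that $TM$ is (projectively) flat. But there are two genuine problems in the middle. First, the claim that $c_2(\mathrm{ad}(E_H))=0$ can be ``read off'' from the exact sequence is false: Whitney's formula together with $c_1(TM)=c_1(\mathrm{ad}(E_H))=0$ gives $c_2(\mathrm{ad}(E_H))+c_2(TM)=c_2(\mathrm{ad}(E_G))=0$, i.e.\ $c_2(\mathrm{ad}(E_H))=-c_2(TM)$, which is exactly the identity one must exploit, not a vanishing statement; if $c_2(\mathrm{ad}(E_H))$ vanished for free, then $c_2(TM)=0$ would follow immediately and no Bogomolov argument would be needed at all. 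Second, and more seriously, to apply Lemma~\ref{lem1} to $\mathrm{ad}(E_H)$ you must know that $\mathrm{ad}(E_H)$ is semistable, and your proposal never establishes this --- you yourself flag the interaction of the slope filtration of $\mathrm{ad}(E_G)$ with the subbundle $\mathrm{ad}(E_H)$ as an unresolved obstacle. A Harder--Narasimhan filtration of a non-semistable $\mathrm{ad}(E_G)$ has pieces of nonzero degree and need not be compatible with the subbundle $\mathrm{ad}(E_H)$ or the quotient $TM$, so the step ``each piece has degree zero and extremal $c_2$'' does not go through as stated.

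The paper closes this gap with two inputs you omit. By Yau's theorem, $c_1(M)=0$ yields a K\"ahler--Einstein metric, so $TM$ is polystable of degree zero; this both makes Lemma~\ref{lem1} applicable to $TM$ directly and, via the theorem of \cite{Bi1}, forces any bundle admitting a holomorphic connection --- in particular $\mathrm{ad}(E_G)$ --- to be semistable. Then $\mathrm{ad}(E_H)$, being a degree-zero subbundle of a semistable degree-zero bundle, is itself semistable, Lemma~\ref{lem1} gives $(c_2(\mathrm{ad}(E_H))\cup\omega^{d-2})\cap[M]\ge 0$ and $(c_2(TM)\cup\omega^{d-2})\cap[M]\ge 0$, and since these two numbers sum to zero both must vanish. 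From there the paper uses \cite{BS} to conclude $TM$ is projectively flat, deduces $c_2(TM)=0$ as a cohomology class from flatness of $\mathrm{End}(TM)$, and invokes \cite{IKO}; your proposed endgame via numerical flatness and a decomposition theorem is plausible but is not needed once $c_1(TM)=c_2(TM)=0$ is in hand.
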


\begin{proof}
Consider the principal $G$--bundle $E_G$ in \eqref{eg}.
Recall that $\theta$ defines
a holomorphic connection on $E_G$. A holomorphic connection on $E_G$
induces a holomorphic connection on the associated vector
bundle $\text{ad}(E_G)$.

Since $c_1(M)\,=\, 0$,
a theorem due to Yau says that the holomorphic tangent
bundle $TM$ admits a K\"ahler--Einstein
metric \cite{Ya}. Fix a K\"ahler--Einstein K\"ahler form
$\omega$ on $M$. The vector bundle $TM$ is polystable because
$\omega$ is K\"ahler--Einstein; see \cite[p. 177, Theorem (8.3)]{Kob}
Hence from Lemma \ref{lem1},
\begin{equation}\label{eq0}
(c_2(TM)\cup \omega^{d-2})\cap [M] \,\geq\, 0\, .
\end{equation}

Since $\text{ad}(E_G)$ admits
a holomorphic connection, and $TM$ is polystable with
$$
\text{degree}(TM)\, =\, 0\, ,
$$
we conclude that the vector bundle $\text{ad}(E_G)$
is semistable (see \cite[p. 2830]{Bi1}). Also, we have
\begin{equation}\label{eq2}
c_j(\text{ad}(E_G))\, =\, 0
\end{equation}
for all $j\, \geq\, 1$ because the vector
bundle $\text{ad}(E_G)$ admits
a holomorphic connection \cite[p. 192--193, Theorem 4]{At}.

Consider the short exact sequence of vector bundles in \eqref{eq1}.
Since $c_1(M)\,=\,
0$, from \eqref{eq1} and \eqref{eq2} it follows that
\begin{equation}\label{eq3}
c_1(\text{ad}(E_H))\, =\, 0\, .
\end{equation}
Since $\text{ad}(E_H)$ is a subbundle of the semistable vector
bundle $\text{ad}(E_G)$ of degree zero, we conclude that
$\text{ad}(E_H)$ is also semistable (any subsheaf of
$\text{ad}(E_H)$ violating the semistability condition of
$\text{ad}(E_H)$ would also violate the semistability condition of
$\text{ad}(E_G)$). Now from Lemma \ref{lem1}
and \eqref{eq3} we conclude that
\begin{equation}\label{eq4}
(c_2(\text{ad}(E_H))\cup \omega^{d-2})\cap [M] \,\geq\, 0\, .
\end{equation}

In view of \eqref{eq2}, \eqref{eq3} and \eqref{eq4}, from
the short exact sequence in \eqref{eq1} it follows that
$$
(c_2(TM)\cup \omega^{d-2})\cap [M]\,=\,
-(c_2(\text{ad}(E_H))\cup\omega^{d-2})\cap [M]
\,\leq\, 0\, .
$$
On the other hand, from Lemma \ref{lem1},
$$
(c_2(TM)\cup\omega^{d-2})\cap [M]\,\geq\, 0\, .
$$
Hence
$$
(c_2(TM)\cup\omega^{d-2})\cap [M]\,=\, 0\, .
$$

Consequently,
$$
((2d\cdot c_2(TM) - (d-1)c_1(TM)^2)\cup\omega^{d-2})\cap [M]
\,=\, 0\, .
$$
Therefore, from Corollary 3 in \cite[p. 40]{BS} we conclude that
$TM$ is projectively flat. In particular,
$$
End(TM)\, =\, TM\otimes\Omega^1_M
$$
is a flat vector bundle. Hence $c_2(End(TM))\,=\, 0$. But
$$
c_2(End(TM))\,=\, 2d\cdot c_2(TM)-(d-1)c_1(TM)^2\, .
$$
Since $c_1(TM)\,=\, 0$, we conclude that $c_2(TM)\,=\, 0$.
Now from \cite[p. 248, Corollary 2.2]{IKO} (see also Lemma 1 of
\cite{Mc3}) we know that $M$ is holomorphically
covered by a complex torus. This completes the proof of
the theorem.
\end{proof}

\begin{remark}\label{rem1}
{\rm The holomorphic tangent bundle $TM$ of a compact connected
K\"ahler manifold $M$ admits a holomorphic connection
if and only if $M$ admits an \'etale covering by a complex torus.
Indeed, if $A\, \longrightarrow\, M$ is a covering map, where $A$ is
a complex torus, then the flat connection on $TA$ given by a 
trivialization of $TA$ descends to a holomorphic flat connection
on $TM$. To prove the converse, assume that $TM$ admits a
holomorphic connection. So $c_1(TM)\,=\, 0\,=\,
c_2(TM)$ \cite[p. 192--193, Theorem 4]{At}. Since
$TM$ is polystable \cite{Ya}, we know that $TM$ is
projectively flat \cite[p. 40, Corollary 3]{BS}. Since $c_1(TM)
\,=\, 0$, and $TM$ is projectively flat, it follows that
a K\"ahler--Einstein metric on $M$ is flat. Therefore,
$M$ is covered by a complex torus.}

{\rm It should be pointed out that even when $TM$ admits a
holomorphic connection, the exact sequence in \eqref{eq1} is not
compatible with the holomorphic connection on $\text{ad}(E_G)$
(unless $\dim M \,=\, 0$)
in the sense that the connection on $TM$ is not
a quotient of the connection on $\text{ad}(E_G)$.}
\end{remark}

\section{Cartan geometries and rational curves}\label{sec4}

A smooth complex projective variety $Z$ is
said to be \text{rationally
connected} if for any two given points of $Z$, there exists an 
irreducible rational curve on $Z$ that contains them; see
\cite[p. 433, Theorem 2.1]{KMM} and
\cite[p. 434, Definition--Remark 2.2]{KMM} for other equivalent
conditions. Note that a rationally connected
projective manifold is connected.

As before, $G$ is a complex Lie group and $H\,\subset\, G$
a closed complex subgroup. The quotient map $G\,\longrightarrow
\, G/H$ defines a holomorphic principal $H$--bundle over $G/H$.
There is a canonical holomorphic Cartan connection $\theta_0$
on this $H$--bundle $G\,\longrightarrow
\, G/H$ given by the identification of $\mathfrak g$
with the left--invariant vector fields on $G$.
We will call $(G/H\, ,G\, ,\theta_0)$ the \textit{tautological
Cartan geometry}.

\begin{theorem}\label{thm2}
Let $(M\, ,E_H\, ,\theta)$ be a holomorphic Cartan geometry
of type $G/H$ such that $M$ is rationally connected. Then
the homogeneous space $G/H$ is a rational projective variety, and
$M\, \cong\, G/H$. Furthermore,
$(M\, ,E_H\, ,\theta)$ is holomorphically isomorphic to
the tautological Cartan geometry $(G/H\, , G\, ,\theta_0)$.
\end{theorem}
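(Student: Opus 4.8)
The plan is to exploit the holomorphic connection on $E_G$ together with the rational connectedness of $M$ to trivialize everything. First I would observe that $E_G$ carries a holomorphic connection (this is the descended form $\widetilde{\theta}$ constructed in Section \ref{sec3}), hence so does the vector bundle $\text{ad}(E_G)$ and, more generally, every vector bundle associated to $E_G$ via a finite-dimensional representation of $G$. On a rationally connected projective manifold, a theorem of Biswas--dos Santos (or the elementary fact that a vector bundle admitting a holomorphic connection on a rationally connected manifold is trivial, since its restriction to every rational curve is trivial by the flatness of the induced connection on $\mathbb P^1$ and one invokes simple connectedness) shows that $\text{ad}(E_G)$ is holomorphically trivial. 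More precisely, the monodromy of the flat connection factors through $\pi_1(M)$, which is trivial for $M$ rationally connected; so the flat connection on $E_G$ has trivial monodromy, and thus $E_G$ is holomorphically trivializable as a flat bundle: there is a holomorphic reduction of $E_G$ to the identity, i.e. a holomorphic section, or equivalently $E_G \cong M \times G$ compatibly with the connection, which becomes the trivial (Maurer--Cartan-free) connection.

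Next I would transport the Cartan data through this trivialization. Under $E_G \cong M\times G$ with its standard flat connection, the subbundle $E_H \subset E_G$ (the reduction of structure group to $H$ used to build $E_G$) corresponds to a holomorphic map $f\colon M \to G/H$: indeed reductions of the trivial $G$-bundle $M\times G$ to $H$ are exactly holomorphic maps $M \to G/H$, and $E_H$ is the pullback $f^*G$ of the tautological $H$-bundle $G \to G/H$. The content of the Cartan condition --- that $\theta(z)\colon T_zE_H \to \mathfrak g$ is an isomorphism at every point --- translates, after chasing through \eqref{eq1}, into the statement that the differential $df\colon TM \to f^*T(G/H)$ is an isomorphism at every point; that is, $f$ is a local biholomorphism (an étale morphism). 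The key short exact sequence \eqref{eq1} is precisely the pullback by $f$ of the tautological sequence $0 \to \text{ad}(G\to G/H) \to G/H \times \mathfrak g \to T(G/H)\to 0$ on $G/H$, and the isomorphism $\text{ad}(E_G)/\text{ad}(E_H)\cong TM$ built from $\theta$ is identified with $df$; I would verify this compatibility by the explicit formula for $\widetilde\theta$.

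Now I would finish by a properness/degree argument. Since $M$ is compact (projective) and $f\colon M \to G/H$ is étale, $f$ is proper, hence a finite covering map onto its image, which is open (étale) and closed (proper), hence is a connected component of $G/H$; replacing $G$ by the stabilizer of that component we may assume $f$ is a finite covering $M \to G/H$. But $M$ is rationally connected, so $M$ is simply connected (rationally connected projective manifolds have trivial fundamental group --- this is a theorem in the Kollár--Miyaoka--Mori circle of ideas, see \cite{KMM}), which forces the covering $f$ to be an isomorphism; in particular $G/H$ is simply connected. Being simply connected and admitting a surjection from a rationally connected manifold, $G/H$ is itself rationally connected and projective, hence a rational projective variety. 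Finally, the isomorphism $f\colon M \xrightarrow{\ \sim\ } G/H$ carries $E_H = f^*G$ to the tautological bundle $G \to G/H$ and, by the compatibility above, carries $\theta$ to $\theta_0$; thus $(M,E_H,\theta)$ is holomorphically isomorphic to the tautological Cartan geometry $(G/H, G, \theta_0)$.

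The main obstacle I anticipate is the passage from "$E_G$ admits a holomorphic connection" to "$E_G$ is holomorphically trivial with trivial flat structure" on a rationally connected $M$: this uses both that $M$ is simply connected and a result guaranteeing that a principal bundle with holomorphic connection on a rationally connected manifold is flat-trivializable (the curvature need not vanish a priori for a holomorphic connection, but on a rationally connected manifold one argues curve by curve, or cites that holomorphic connections on rationally connected manifolds are automatically flat because $H^0(M,\Omega^2_M)=0$ so the $(2,0)$-curvature, being holomorphic, vanishes). Getting this step clean --- and then extracting the étale map $f$ and checking that $\theta$ goes to $\theta_0$ under the identifications --- is where the real work lies; the concluding topology (simple connectedness of rationally connected manifolds, properness of $f$) is standard.
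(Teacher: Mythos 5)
Your overall strategy is the same as the paper's: the connection on $E_G$ induced by $\theta$ is flat because $M$ is rationally connected, $M$ is simply connected, and the resulting developing map $M \longrightarrow G/H$ (which is exactly your map $f$, obtained by flat-trivializing $E_G$ and reading off the $H$--reduction) is \'etale, hence a covering map by compactness, hence an isomorphism by simple connectedness of $G/H$, and it carries $\theta$ to $\theta_0$. One caveat on the first step: your parenthetical justification of flatness via ``$H^0(M,\Omega^2_M)=0$'' is insufficient on its own, because the curvature is a holomorphic section of $\Omega^2_M\otimes \mathrm{ad}(E_G)$, not of $\Omega^2_M$; one must either run the curve-by-curve argument you also sketch (on a very free rational curve $\mathrm{ad}(E_G)$ restricts trivially, since it inherits a holomorphic connection, while the twist by $\Omega^2_M$ restricts with strictly negative degrees) or cite the result the paper uses, namely \cite[p.~160, Theorem 3.1]{Bi2}.

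The one step that genuinely fails as written is the last one: ``being simply connected and admitting a surjection from a rationally connected manifold, $G/H$ is itself rationally connected and projective, hence a rational projective variety.'' Rational connectedness does not imply rationality --- a smooth cubic threefold is rationally connected (even unirational) but not rational, by Clemens--Griffiths. The paper instead deduces rationality from the fact that $G/H$ is a simply connected \emph{homogeneous} projective manifold: by the Borel--Remmert structure theorem such a manifold is a generalized flag variety, which is rational. You have every hypothesis needed for that argument already in hand (homogeneity, projectivity, simple connectedness), so the gap is easily closed, but the implication you actually invoke is false in general.
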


\begin{proof}
Consider the principal $G$--bundle $E_G$ defined in
\eqref{eg}. Let $D$ denote the holomorphic connection on
the principal $G$--bundle $E_G$ induced by $\theta$ (see
Section \ref{sec3}). Since
$M$ is rationally connected, the curvature of
$D$ vanishes (see \cite[p. 160,
Theorem 3.1]{Bi2}). From the fact that $M$ is rationally 
connected it also follows that $M$ is
simply connected \cite[p. 545, Theorem 3.5]{Ca},
\cite[p. 362, Proposition 2.3]{Kol}.

Since $D$ is flat, and $M$ is
simply connected, we have
the developing map for the Cartan geometry
$(M\, ,E_H\, ,\theta)$
$$
\gamma\, :\, M\, \longrightarrow\, G/H
$$
which is a submersion \cite{S},
\cite[p. 3, Theorem 2.7]{Mc1}. Hence $\gamma$ is a covering
map because $M$ is compact. Since $G/H$ is covered by the
rationally connected complex projective manifold $M$, we
conclude that $G/H$ is also a rationally connected complex
projective manifold. We noted above that this implies
that $G/H$ is simply connected \cite[p. 545, Theorem 3.5]{Ca},
\cite[p. 362, Proposition 2.3]{Kol}.
Since $\gamma$ is a covering map, and $G/H$ is simply connected,
it follows that $\gamma$ is an isomorphism.

Since $G/H$ is a simply connected homogeneous projective variety,
we know that $G/H$ is rational. This completes the proof of the
theorem.
\end{proof}

%%%%%%%%%%%%%%%%%%%%%%%%%%%%%%%%%%%%%%%%%%%%%%%%%%%%%%%%%%%%%%

\end{document}